\newtheorem{thm}{Theorem}[section]
\newtheorem{defn}[thm]{Definition}
\newtheorem{prop}[thm]{Proposition}
\newtheorem{lemma}[thm]{Lemma}
\newcommand{\Hh}{{\mathcal H}}
\newcommand{\Oo}{{\mathcal O}}
\newcommand{\Tt}{{\mathcal T}}
\newcommand{\NM}{{\mathbb N}}
\newcommand{\RM}{{\mathbb R}}
\renewcommand{\phi}{\varphi}                  
\renewcommand{\tilde}{\widetilde}
\newcommand{\wee}{\, \tilde\wedge \,}       
\newcommand{\dsup}{d_{\text{\rm sup}}}   
\newcommand{\dinf}{d_{\text{\rm inf}}}   
\newcommand{\op}{\text{\rm op}}     
\newcommand{\rp}{r_{\text{\rm pack}}}
\newcommand{\rc}{r_{\text{\rm cov}}}
\newcommand{\rmin}{r_{\text{min}}}
\title{A metric characterisation of repulsive tilings}
\author{J. Savinien$^{1,2}$\\
{\small $^{1}$ Université de Lorraine, Institut Elie Cartan de Lorraine, UMR 7502, Metz, F-57045, France.}\\
{\small $^{2}$ CNRS, Institut Elie Cartan de Lorraine, UMR 7502, Metz, F-57045, France.}
}
\begin{document}

\maketitle

\begin{abstract}
A tiling of $\RM^d$ is {\it repulsive} if no $r$-patch can repeat arbitrarily close to itself, relative to $r$.
This is a characteristic property of aperiodic order, for a non repulsive tiling has arbitrarily large local periodic patterns.

We consider an aperiodic, repetitive tiling $T$ of $\RM^d$, with finite local complexity.
From a spectral triple built on the discrete hull $\Xi$ of $T$, and its Connes distance, we derive two metrics $\dsup$ and $\dinf$ on $\Xi$.
We show that $T$ is repulsive if and only if $\dsup$ and $\dinf$ are Lipschitz equivalent.
This generalises previous works for subshifts by J. Kellendonk, D. Lenz, and the author. 
\end{abstract}

\section{Introduction}

In two recent articles in collaboration with J. Kellendonk and D. Lenz  \cite{KS10,KLS11}, 
we used constructions of non commutative geometry \cite{Co94} to derive a new characterisation of  aperiodically ordered $1d$-subshifts.
We showed that a minimal and aperiodic subshift $X$ has bounded powers if and only if two metrics derived from the Connes distance of a spectral triple over $X$ are Lipschitz equivalent.
An essential ingredient to obtain this result is the notion of {\em privileged words} \cite{KLS11}.
In this paper, we generalise this formalism and this results to tilings of $\RM^d$.
The essential ingredient here is the notion of {\em privileged patches} of a tiling.

A $1d$-subshift has bounded powers if its language does not contain arbitrarily large powers, {\it i.e.} there is an integer $p$ such that $n$-fold
repetitions $w^n=w\cdots w$ of a word $w$ cannot occur for $n>p$.
Linearly recurrent subshifts, which are usually considered highly ordered, share this property \cite{LP03, Dur00, Dur03}. 
Loosely speaking, bounded powers means that no factor can repeat too close, or overlap too much, along a sequence in the subshift.
Bounded powers is equivalent to the property that a complete first return $u'$ of a word $u$ must be longer than a uniform constant times the length of $u$: $|u'|> C |u|$.
The corresponding notion for tilings is {\em repulsiveness}: no patch can repeat arbitrarily close to itself relative to its size, see equation~\eqref{eq-rep}.
A non repulsive tiling has arbitrarily large local periodic patterns -- the analogue of arbitrarily large powers.
As for subshifts, linearly repetitive tilings are repulsive \cite{Pat98, Len04}.

The property of bounded (or unbounded) powers in a subshift is measured by privileged words.
Privileged words are iterated complete first returns to letters of the alphabet. 
Privileged words were introduced in \cite{KLS11}, and have recently encountered a lot of interest in the combinatorics of words \cite{Pelto13, FPZ13, Pelto, FJS13}.
For rich subshifts \cite{GJWZ09} privileged words coincide exactly with palindromes (see \cite{KLS11} Section 2.2 for further details).

We generalise this notion to tilings. 
We define privileged patches: a notion of iterated complete first returns to the prototiles, see Section~\ref{sec-priv}.
For $1d$ subshifts, a privileged patch is a generalisation of a privileged word obtained with bilateral versions of complete first returns.
Because of the geometry in $\RM^d$, the combinatorics of patches is much more involved than that of words.
We need a few technical lemmas to deal with this.
But the crucial point is the generalisation of privileged words to the tilings setting.
Once the right definition of privileged patch is at hand, our formalism 
for subshifts essentially goes through for tilings of $\RM^d$.
The spectral triple we used in \cite{KLS11} for subshift is build from the tree of privileged words of the subshift.
The spectral triple we use here is the same one built on the tree of privileged patches of the tiling.
This allows us to characterise repulsive tilings by Lipschitz equivalence of two metrics derived from the Connes distance of the spectral triple, in complete analogy with the case of subshifts treated in \cite{KLS11}.

Our initial motivation in studying properties of aperiodically ordered subshifts and tilings, came from non commutative geometry (NCG) \cite{Co94}.
Namely we were interested in the construction of non commutative Riemanian structures, {\it i.e.} spectral triples, over totally disconnected spaces defined by tilings and subshifts.
As it turns out, and as in~\cite{KLS11}, the criterium for aperiodic order we derive here can be explained in a rather combinatorial way, without  introducing the framework of NCG and giving the details of the construction of the spectral triple.
So we follow this line in the paper: we give the criterium {\it ad hoc} to state and prove our result.
And in the last section we describe briefly the underlying spectral triple.

\medskip

The paper is organised as follows.
In Section~\ref{sec-basics} we remind the reader of the basic definitions for tilings of $\RM^d$, and the classical results we need.
We introduce privileged patches in Section~\ref{sec-priv}, and state some combinatorial properties, including technical lemmas which allows us to adapt our formalism for subshifts to tilings of $\RM^d$.
In Section~\ref{sec-tree} we explain the construction of the tree of privileged patches, from which we define the two Connes metrics.
In Section~\ref{sec-charact} we state and prove our main result, namely that a tiling is repulsive if and only if the Connes metrics are Lipschitz equivalent.
The construction of the spectral triple, from which the Connes metrics are derived, is given briefly in Section~\ref{sec-ST}.

\medskip

\noindent {\bf Aknowlegements.} The author would like to thank J. Kellendonk and D. Lenz for useful discussions, and encouragements to publish this work.

\section{Basic definitions}
\label{sec-basics}

A {\em tile} of $\RM^d$ is a subset $t \subset \RM^d$ which is homeomorphic to a closed ball.
A {\em tiling} of $\RM^d$, is a countable family of tiles, $T=\{ t_i\}_{i\in\NM}$, which have pairwise disjoint interiors and whose union covers $\RM^d$.
Given a tiling $T$, we specify a {\em marker}~\footnote{sometimes also called a {\em puncture}, so that one talks about {\em punctured tilings}.} in each of its tile $t$: a point $x(t)\in \RM^d$ in its interior.

A {\em translate} of a family $F=\{ t_j\}_{j\in J}$ of tiles of $T$, is a family $F+a=\{t_j + a\}_{j\in J}$, for some $a\in\RM^d$.
Let $x$ be the marker of a tile of $T$, and $r>0$. 
We call an {\em $r$-patch}, or a {\em patch} of radius $r$, the finite family of tiles of $T-x$ all of whose markers lie inside the open ball $B(0,r)$. 
In addition, $r$ is maximal with respect to the family of tiles defining the patch.
As a consequence, the only $0$-patch is the empty patch.
The patches made of a single tile (containing the marker of a single tile), are called {\em prototiles}.

Consider an $r$-patch $p$ of $T$.
Given a family $F=\{ t_j\}_{j\in J}$ of tiles of $T$, we say that {\em $p$ occurs in $F$}, if there is a translate of $p$ which is a subset of $F$: $p+a\subset F$ for some $a \in \RM^d$.
The translate $p+a$ is called an {\em occurrence} of $p$ in $F$.
Given a subset $U$ of $\RM^d$, we say that {\em $p$ occurs in $U$}, if there is an occurrence of $p$ in $T$, the union of all of whose tiles is a subset of $U$. 
We mean that a patch $p$ is marked at the origin: $x(p)=0$.
And that an occurrence of $p$ in $T$, in a family of tiles $F$, or in a subset of $\RM^d$, is some translated copy $p+a$ marked at $a$: $x(p+a)=a$. 



We will consider tilings satisfying the following three properties.
\begin{defn}
\label{def-hypT}
A tiling $T$ of $\RM^d$ is called
\begin{enumerate}[(i)]

\item {\em aperiodic} if $T+a=T$ implies $a=0$;

\item {\em repetitive} if for any $r>0$, and any $r$-patch $p$ of $T$, there exists $R>0$ such that $p$ occurs in any ball of radius $R$;

\item {\em FLC}, or has {\em Finite Local Complexity}, if for any $r>0$ there are finitely many $r$-patches.

\end{enumerate}
\end{defn}

Let $T$ be a repetitive and FLC tiling of $\RM^d$, and $p$ an $r$-patch of $T$.
The {\em Delone set of occurrences of $p$ in $T$} is the set $L_p$ of markers of all occurrences of $p$ in $T$.
This is a Delone set as the distance between nearest points of $L_p$ is uniformly bounded.
We let $\rp(L_p)$ (resp. $\rc(L_p)$) be one half of that uniform minimal distance (resp. maximal distance). 
It is called the {\em packing radius}  of $L_p$ (resp. {\em covering radius}): any ball of radius $\rp(L_p)$ (resp. $\rc(L_p)$) contains at most (resp. at least) one point of $L_p$.
A tiling $T$ is said to be {\em repulsive} if
\begin{equation}
\label{eq-rep}
 \ell = \inf \Bigl\{ \frac{\rp(L_p)}{r} \, : \, \text{\rm $p$ an $r$-patch of $T$}  \Bigr\} > 0.
\end{equation}
Informally, $\ell>0$ means that patches cannot overlap too much.
On the contrary, in a non-repulsive tiling, there are arbitrarily large $r$-patches with arbitrarily close occurrences relative to $r$.
Such occurrences overlap over an arbitrarily large proportion of their tiles.
This implies that a non-repulsive tiling has arbitrarily large local periodic patterns, see Figure~\ref{fig-nrep}.
 
\medskip

We now fix an aperiodic, repetitive, and FLC tiling $T$ of $\RM^d$ and assume that there is a tile whose marker lies at the origin.
We endow the family of all of its translates, $T+\RM^d$, with the following topology.
A base of open sets is given by the acceptance domains of patches: for $p$ an $r$-patch of $T$
\[
[p] = \bigl\{ T'\in \ T+\RM^d \, : \, p \ \text{\rm occurs at the origin in } T' \bigr\}
\] 
If $q$ is a patch contained in $p$, which we write $q\subseteq p$, then $[p]\subset [q]$.
Hence two tilings are close for this topology, if they agree on a large patch around the origin.
The {\em discrete hull} of a $T$ is the closure of its translates in this topology:
\[
\Xi = \overline{ \bigl\{ T + a \, :  \, a \in \RM^d, \, T + a \, \text{\rm has a marker at the origin} \bigr\}}\,.
\]
As a consequence of the hypothesis in Definition~\ref{def-hypT} the following classical results hold:
\begin{enumerate}[-]

\item $\Xi$ is a Cantor set (compact, totally disconnected, with no isolated point);

\item the family of acceptance domains $[p]$ is a countable base of clopen sets\footnote{closed and open sets} for $\Xi$;

\item any $T'\in \Xi$ satisfies the hypothesis of Definition~\ref{def-hypT}, and the closure of $T'+\RM^d$ is $\Xi$.

\end{enumerate}
The discrete hull is metrizable.
Any function $\delta: [0,+\infty) \rightarrow (0,1]$, which decreases and has limit $0$ at $+\infty$, defines a ultra-metric on $\Xi$ as follows: 
\begin{equation}
\label{eq-ultrametric}
d_\delta (T_1,T_2) = 
\inf \bigl\{ \delta(r) : \text{\rm there is an $r$-patch $p$ occurring in both $T_1$ and $T_2$ at the origin} \bigr\}.
\end{equation}

\section{Privileged patches}
\label{sec-priv}


Given an $r$-patch $p$, we say that an $r'$-patch $p'$ is {\em derived} from $p$ if
\begin{enumerate}[(i)]
\item $p$ is contained in $p'$; 
\item $p$ occurs at least twice in $p'$;
\item for any $\tilde{r}<r'$, and any $\tilde{r}$-patch $q$ contained in $p'$, $p$ occurs at most once in $q$.
\end{enumerate}
See Figure~\ref{fig-derpatch} for an illustration.
Condition (ii) means that $p'$ contains $p$ as a subpatch (hence with marker at the origin), plus another translate $p+a$ for some $a\neq 0$.
Conditions (ii) and (iii) mean that $p'$ is a minimal extension of $p$ containing two occurrences of $p$.
\begin{figure}[h]
\centering
\includegraphics[scale=.5]{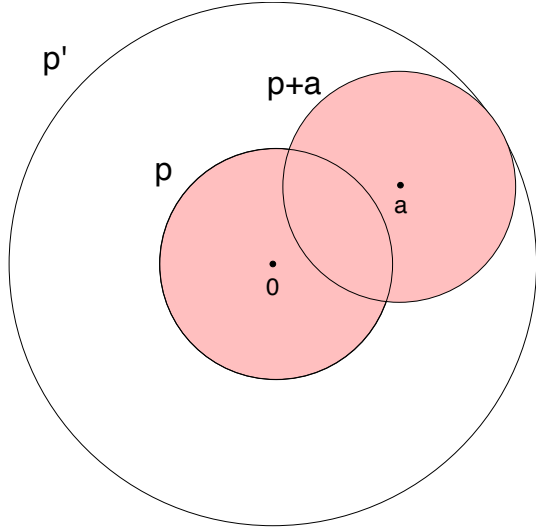}
\caption{A patch $p'$ derived from $p$.}
\label{fig-derpatch}
\end{figure}

We define {\em privileged patches} inductively, as follows:
\begin{enumerate}
\item[(0)] the empty patch is the only privileged patch of order $0$;
\item[(1)] the prototiles of $T$, are the privileged patches of order $1$;
\item[(n)] for $n>1$ a privileged patch of order $n$ is an $n$-th iterated derived patch from the empty patch.
\end{enumerate}
For  $1d$-subshifts, {\it i.e.} symbolic one-dimensional tilings, this is a two-sided version of {\em privileged words} introduced in \cite{KLS11}.

\bigskip

Let us state some elementary properties of derived patches.
The first two Lemmas are needed to build the tree of privileged patches in the next Section.
\begin{lemma}
\label{lem-der}
Let $q$ be a patch derived from some patch, then
\begin{enumerate}[(i)]
\item there exists a unique patch $p$ such that $p'=q$;

\item if $q$ is privileged, then there exists a unique privileged patch $p$ such that $p'=q$;

\item if $q$ is privileged, and $p$ is a privileged patch contained in $q$, then there exists $i\ge 0$ such that $q$ is an $i$-th iterated derived patch from $p$, which we write $p^{(i)}=q$.

\end{enumerate}
\end{lemma}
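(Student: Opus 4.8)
The plan is to establish the three parts in order. Existence in (i) is given by the hypothesis (that $q$ is derived from something), so the real content of (i) is \emph{uniqueness}, which I would obtain from an intrinsic description of the parent: I claim the patch $p$ with $p'=q$ is the largest sub-patch of $q$ (necessarily marked at the origin) that occurs at least twice in $q$, a datum depending on $q$ alone. By condition (ii) of derivation, $p$ does occur twice; for maximality, suppose a strictly larger sub-patch $\tilde p\supsetneq p$ also occurred twice in $q$, say at the origin and at a translate $b\neq0$. Since $p\subseteq\tilde p$, the two copies of $\tilde p$ force copies of $p$ at $0$ and at $b$; and since $\tilde p+b$ lies inside the open ball $B(0,r')$ defining the $r'$-patch $q$, both copies of $p$ are already captured by a proper sub-patch of $q$ (one of radius strictly less than $r'$). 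This contradicts condition (iii) in the definition of $q=p'$, which forbids two occurrences of $p$ in any proper sub-patch. Hence $p$ is the maximal twice-occurring sub-patch, and uniqueness follows. The one routine point is the distance estimate that $\tilde p+b$, hence $p+b$, sits strictly inside $B(0,r')$; I would suppress it.

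Part (ii) is then a formality. If $q$ is privileged of order $n$, the inductive definition writes it as the derived patch $Q'$ of the privileged patch $Q$ of order $n-1$ (the empty patch when $n=1$); this exhibits a privileged parent, and by the uniqueness from (i) it is the unique parent. For (iii) I would induct on the order of $q$ and pass to the privileged parent $\hat q$ via (ii), so that $q=\hat q{}'$ has order $n-1$. If $p=q$ take $i=0$; otherwise $p\subsetneq q$, and the argument hinges on the following claim.

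\textbf{Claim.} A privileged patch $p\subsetneq q$ is contained in $\hat q$. By the maximality description from (i), $\hat q$ is the largest sub-patch occurring twice in $q$, so the Claim is equivalent to saying that $p$ occurs at least twice in $q$. I would prove it by induction on the order of $q$. Suppose it fails, so that $\hat q\subsetneq p\subsetneq q$; then $p$ strictly contains $\hat q$, hence has order $\ge2$ and a privileged parent $\hat p\subsetneq p$ with $p=\hat p{}'$. I compare $\hat p$ with $\hat q$. If $\hat q\subseteq\hat p$: since $\hat p$ occurs twice in $p$ and $\hat q\subseteq\hat p$, the patch $\hat q$ occurs twice inside $p$, which is a proper sub-patch of $q$, contradicting the minimality of $q=\hat q{}'$. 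If instead $\hat p\subsetneq\hat q$ (the degenerate case $\hat p=\emptyset$ being impossible here, as it would make $p$ a prototile strictly containing $\hat q$): then $\hat q$ has order $n-1$, so the induction hypothesis applied to $\hat p\subsetneq\hat q$ produces a second occurrence of $\hat p$ already inside $\hat q\subsetneq p$; this yields two occurrences of $\hat p$ in the proper sub-patch $\hat q$ of $p=\hat p{}'$, contradicting the minimality of $p$. Both cases being impossible, the Claim holds.

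With the Claim in hand, (iii) follows by iteration: from $p\subseteq\hat q$ and the smaller order of $\hat q$ one gets $\hat q=p^{(i')}$, whence $q=\hat q{}'=p^{(i'+1)}$; the case $p=\emptyset$ is immediate since $q=\emptyset^{(n)}$ by the definition of order. The main obstacle is precisely this Claim — that containment of privileged patches can never skip a derivation step — and the delicate part is arranging the induction so that the two possible positions of $\hat p$ relative to $\hat q$ each collapse onto a minimality violation, one for $q=\hat q{}'$ and one for $p=\hat p{}'$. This is where the full inductive structure of privileged patches, together with the parent characterisation established in (i), is genuinely used.
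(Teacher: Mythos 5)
Your proposal is correct overall, and its three parts sit at different distances from the paper. For (i) and (ii) you are essentially reproducing the paper's argument in different clothing: the paper also derives uniqueness by playing condition (iii) of the definition of derivation off against the two occurrences supplied by a larger twice-occurring sub-patch (its one-line step is ``$p_2\subsetneq p_1$, and this implies $p'_2\subsetneq p'_1$, a contradiction''), and your ``largest centred sub-patch of $q$ occurring twice in $q$'' is an intrinsic repackaging of exactly that comparison. One caveat: the ``routine point'' you suppress is stated imprecisely. What is needed is not that $p+b$ sits inside the open ball $B(0,r')$ (every tile of $q$ does, trivially), but that the two occurrences of $p$ lie in a common $\tilde r$-patch of $q$ with $\tilde r<r'$, i.e.\ that they miss the outermost markers of $q$; in $\RM^d$ with $d\ge 2$ this is a genuine geometric assertion, not a formality. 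Since the paper glosses over precisely the same point in the quoted step, your proof of (i) matches the paper's level of rigour rather than exceeding it.

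Part (iii) is where you genuinely diverge, and to your advantage. The paper runs a single induction on the radius of $q$, splitting on the position of $p$ relative to the parent $\tilde p$ of $q$: the case $p\subseteq\tilde p$ is handled as you handle it (apply the inductive hypothesis and append one derivation step), but the case $\tilde p\subsetneq p$ is dispatched in one line, ``$p=\tilde p^{(j)}=q^{(j-1)}$, which implies $q\subseteq p$, a contradiction''. That step tacitly uses that any derivation chain issuing from $\tilde p$ inside the given tiling must have $q$ as its first link (uniqueness of the \emph{centred} derived patch), a fact the paper never isolates or proves. Your Claim replaces this by a nested induction that needs only the derivation axioms and the total ordering of centred sub-patches: when $\hat q\subseteq\hat p$, the two occurrences of $\hat p$ in $p$ give two occurrences of $\hat q$ in the proper sub-patch $p$ of $q$, violating minimality of $q=\hat{q}'$; when $\hat p\subsetneq\hat q$, the inductive hypothesis gives two occurrences of $\hat p$ in the proper sub-patch $\hat q$ of $p$, violating minimality of $p=\hat{p}'$. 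This is longer than the paper's treatment but more self-contained, and it makes explicit exactly the step the paper leaves implicit; the cost is the slightly more delicate bookkeeping of the double induction, which you have set up correctly (the degenerate cases $p=\emptyset$ and $\hat p=\emptyset$ are disposed of properly).
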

\begin{proof}
(i) Assume that $q=p'_1=p'_2$, for two distincts patches $p_i$ of radius $r_i$, $i=1,2$.
We may assume $r_2<r_1$, but then $p_2 \subsetneq p_1$, and this implies $p'_2 \subsetneq p'_1$ a contradiction. 

(ii) If $q$ is privileged, by definition there exists a privileged patch $p$ such that $p'=q$, and by (i) $p$ is unique.

(iii) We prove this by induction on the radius of $q$. 
Let $(r_n)_\NM$ be the non-decreasing sequence of radii of privileged patches of $T$ (which exists by FLC).  
The property is obvious for privileged patches of radius $r_1$: $q$ is a prototile with smallest radius 
and is derived for the empty patch.
Assume the property holds for all privileged patches of radii less than or equal to $r_{n}$, for some $n>1$.
Consider a privileged patch $q$ of radius $r_{n+1}$, and a privileged patch $p\subseteq q$.
The case $p=q=p^{(0)}$ is trivial, so assume $p\subsetneq q$.
By (ii) there exists a unique privileged patch $\tilde p\subsetneq q$ with $\tilde p^{(1)}=q$.
Case $p\subseteq\tilde p$ : by induction $\tilde p = p^{(j)}$ for some $j\ge 0$, and so $q=p^{(j+1)}$.
Case $\tilde p\subsetneq p$ : by induction there is a $j>0$ such that $p=\tilde p^{(j)} = q^{(j-1)}$, which implies $q\subseteq p$ a contradiction.
\end{proof}

\begin{lemma}
\label{lem-rder}
\begin{enumerate}[(i)]
\item Let $p$ be an $r$-patch, and $p'$ an $r'$-patch derived from $p$, then 
\[
2 \rp( L_p)  + r \le r' \le 2 \rc( L_p)  + r.
\]

\item Let $(p_n)_{n\ge1}$ be a sequence of $r_n$-patches, such that $p_{n+1}$ is derived from $p_n$ for all $n$.
Then 
\( r_{n+1} \ge 2n \,\rmin\), where $\rmin$ is the radius of the smallest prototile.
If in addition $T$ is repulsive, then \(r_{n+1}\ge (2\ell + 1)^n \rmin\).
\end{enumerate}
\end{lemma}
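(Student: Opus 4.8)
The plan is to reduce everything to the geometry of the Delone set $L_p$ of occurrences, treating part (ii) as a routine iteration of part (i).

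For part (i), I would first use conditions (ii) and (iii) to pin down the structure of $p'$: it carries the canonical occurrence of $p$ at the origin together with at least one further occurrence, centred at a marker $a \in L_p$ with $a \neq 0$, and condition (iii) says that this second occurrence only just fits, i.e. $r'$ is the smallest radius at which two occurrences of $p$ appear. Since $0$ and $a$ are distinct points of $L_p$, whose minimal inter-point distance is $2\rp(L_p)$, we get $|a| \ge 2\rp(L_p)$. The key geometric point is that to see the radius-$r$ patch $p$ occurring at $a$ inside $p'$, the patch $p'$ must contain the whole radius-$r$ neighbourhood of $a$; hence its radius must exceed $|a|$ by at least $r$, giving the lower bound $r' \ge |a| + r \ge 2\rp(L_p) + r$. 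For the upper bound I would invoke the covering radius: since $0 \in L_p$ and every ball of radius $\rc(L_p)$ meets $L_p$, the nearest occurrence to the origin lies at distance at most $2\rc(L_p)$, so one may take $|a| \le 2\rc(L_p)$. The origin-centred patch of radius $|a| + r$ then already contains two occurrences of $p$, and the minimality built into the notion of derived patch forces $r' \le |a| + r \le 2\rc(L_p) + r$.

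For part (ii) I would simply iterate the lower bound of (i), which reads $r_{n+1} \ge 2\rp(L_{p_n}) + r_n$. The first estimate follows from the uniform bound $\rp(L_{p_n}) \ge \rmin$ — two distinct occurrences of any patch are centred at distinct markers of $T$ and so cannot be closer than the smallest prototile permits — which yields $r_{n+1} \ge r_n + 2\rmin$ and hence $r_{n+1} \ge 2n\,\rmin$ after telescoping down to $r_1 \ge \rmin$. If $T$ is repulsive, the definition of $\ell$ in \eqref{eq-rep} gives the sharper $\rp(L_{p_n}) \ge \ell\, r_n$, so the recursion improves to $r_{n+1} \ge (2\ell + 1)\, r_n$; iterating and again using $r_1 \ge \rmin$ gives $r_{n+1} \ge (2\ell+1)^n\, \rmin$.

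The main obstacle is entirely contained in part (i): converting the combinatorial minimality conditions (i)--(iii) defining a derived patch into the precise statement that the completing second occurrence sits at distance $|a| \in [2\rp(L_p),\, 2\rc(L_p)]$ and that enclosing it costs exactly the additional radius $r$. This requires careful bookkeeping with the convention that patch radii are measured by open balls of markers with $r$ maximal, so that ``containing an occurrence of the radius-$r$ patch $p$ at $a$'' is correctly identified with containing a full radius-$r$ ball about $a$; once this identification is made, the packing and covering radii of $L_p$ control the two sides and part (ii) follows by the elementary recursions above.
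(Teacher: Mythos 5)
Your proposal is correct and takes essentially the same route as the paper: part (i) is precisely the unpacking of the definitions (the second occurrence sits at a point $a\in L_p\setminus\{0\}$, hence $2\rp(L_p)\le |a|\le 2\rc(L_p)$, and containment/minimality of the derived patch converts $|a|$ into the radius bounds) that the paper compresses into ``follows at once from the definition''. Part (ii) coincides with the paper's own argument: iterate the lower bound of (i), using $\rp(L_{p_n})\ge \rmin$ in the general case and $\rp(L_{p_n})\ge \ell\, r_n$ under repulsiveness, then telescope.
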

\begin{proof}
The first claim follows at once from the definition.
We use the first inequality in (i) inductively to get
\[
r_{n+1} \ge 2 \rp( L_{p_{n}}) + r_{n} \ge 2 \, \rmin + r_{n} \ge \cdots \ge 2n \, \rmin.
\]
If $T$ is repulsive, then $\rp( L_{p_{j}}) \ge \ell r_j$ for all $j$, see equation~\eqref{eq-rep}, so one gets
\[
r_{n+1} \ge 2 \rp( L_{p_{n}}) + r_{n} \ge (2 \ell +1 ) r_{n} \ge \cdots \ge (2\ell +1)^n \rmin.
\]
\end{proof}

The following technical lemma is analogous to Lemma~3.8 in \cite{KLS11}.
It states that if a tiling is not repulsive, then one can find arbitrary long sequences of derived privileged patches whose radii grow ``slowly''.
\begin{lemma}
\label{lem-nrep}
If $T$ is not repulsive, then for all $m\in\NM$, there exists privileged patches $p_0, p_1,\ldots p_m$, of radii $r_0, r_1, \ldots r_m$ respectively, such that
\begin{enumerate}[(i)]
\item $p_{j}=p_0^{(j)}$, for all $j=1, \ldots, m$,

\item $r_m \le 2 r_1$.

\end{enumerate}
\end{lemma}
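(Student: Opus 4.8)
The plan is to turn non-repulsiveness into a single large patch that repeats very close to itself, and then to read a long, slowly growing chain of privileged patches off the resulting locally periodic region.

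First I would unfold the hypothesis. Since $T$ is not repulsive, $\ell=0$ in \eqref{eq-rep}, so for any prescribed $\epsilon>0$ there is an $r$-patch $p$ with $\rp(L_p)<\epsilon r$; as $\rp(L_p)\ge\rmin$ this forces $r>\rmin/\epsilon$, so $r$ is as large as we like. Set $\rho:=2\rp(L_p)<2\epsilon r$, the minimal distance between occurrences of $p$. Two occurrences realise this distance, and translating $T$ inside its hull I may put one at the origin and the other at a point $v$ with $|v|=\rho$. Because an $r$-patch is the complete local configuration within radius $r$, the occurrence $p+v\subseteq T$ makes the $s$-patch of $T$ at $v$ equal to $q+v$ for every $s$ safely below $r$, where $q$ is the $s$-patch at the origin. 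Hence every sub-patch of $p$ centred at the origin, of radius well below $r$, re-occurs at distance $\rho$, so its nearest re-occurrence lies within $\rho$.

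Next I would run the privileged chain through the origin: let $q_0$ be the empty patch, $q_1$ the prototile at the origin, and $q_{j+1}$ the smallest patch at the origin containing two occurrences of $q_j$, i.e. $q_{j+1}=q_j'$ evaluated at the origin occurrence. By Lemma~\ref{lem-der} these are precisely the privileged patches containing $q_1$, listed by increasing radius, each derived from the previous one. While $q_j$ has radius well below $r$ it is one of the sub-patches above, so its nearest re-occurrence is within $\rho$; the minimal extension $q_{j+1}$ therefore only has to capture that nearby copy, and its radius obeys $r_{j+1}\le r_j+\rho+\kappa$, where $\kappa$ is a fixed bound coming solely from the patch-radius convention (the largest gap between neighbouring markers) and is negligible next to the large $r$. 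Thus inside the periodic region each derivation adds at most $\rho+\kappa$ to the radius -- the exact opposite of the geometric growth of Lemma~\ref{lem-rder}(ii) in the repulsive case.

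Finally I would cut out the window (assuming $m\ge2$, the case $m=1$ being immediate). Let $k$ be least with $r_{q_k}\ge(m-1)(\rho+\kappa)$, which exists since $r_{q_0}=0$, and set $p_0:=q_{k-1},\,p_1:=q_k,\,\dots,\,p_m:=q_{k+m-1}$, so $p_j=p_0^{(j)}$ and (i) holds. Minimality of $k$ gives $r_1=r_{q_k}<m(\rho+\kappa)$, and choosing $\epsilon$ small enough that $2m(\rho+\kappa)$ stays well below $r$ keeps the whole window inside the periodic region, so each of the $m-1$ steps from $p_1$ to $p_m$ adds at most $\rho+\kappa$. Hence $r_m\le r_1+(m-1)(\rho+\kappa)\le 2r_1$, the last inequality being exactly $r_1\ge(m-1)(\rho+\kappa)$; this is (ii). The main obstacle is precisely the radius estimate $r_{j+1}\le r_j+\rho+\kappa$: Lemma~\ref{lem-rder}(i) offers only the global bound $r'\le 2\rc(L_{q_j})+r_j$, and the covering radius $\rc$ is dictated by distant parts of $T$ and is hopelessly large here. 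The content is that the derived patch built at the origin occurrence need only reach the nearest re-occurrence of $q_j$, which local periodicity pins within $\rho$; making this precise, verifying that the entire chain $q_1,\dots,q_{k+m-1}$ stays centred well inside $p$ so that local periodicity keeps applying, and absorbing the bounded boundary term $\kappa$ by taking $p$ large, is where the real work lies.
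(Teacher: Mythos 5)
Your proof is correct, and its engine is the same as the paper's: non-repulsiveness produces a patch of very large radius with a second occurrence at distance $\rho$ tiny relative to that radius, so that every privileged patch centred at one occurrence and staying inside it re-occurs within $\rho$, forcing its derived patch to have radius at most $r_j+\rho$ plus a bounded correction; iterating yields a long, slowly growing chain of iterated derived patches. Where you genuinely depart from the paper is in how the window with $r_m\le 2r_1$ is extracted. The paper (in its notation, where the big repeating patch is $q$) makes two maximality choices: the largest privileged patch $p\subseteq q$ with the same marker, shown to have radius $r\ge r_q/2$ by the same slow-growth estimate, and then the largest privileged patch $p_0\subsetneq p$ of radius $r_0<r/2$; this forces $r_1\ge r/2$ while $p_m\subseteq p$ gives $r_m\le r\le 2r_1$, and it needs Lemma~\ref{lem-der}(iii) to identify $p$ as $p_0^{(n)}$ together with the count $n\ge r/(4a)>m$ to guarantee the chain is long enough. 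You instead run the canonical chain of privileged patches at the origin and use a first-passage index: $k$ least with $r_{q_k}\ge(m-1)(\rho+\kappa)$, whence $r_1<m(\rho+\kappa)$ by minimality and $r_m\le r_1+(m-1)(\rho+\kappa)\le 2r_1$ by the step bound. This avoids Lemma~\ref{lem-der}(iii) and the double maximality choice altogether, at the cost of the bookkeeping condition that the whole window stay inside the big patch, roughly $2m(\rho+\kappa)\le r$; as you note this follows from taking $\epsilon$ small, though you need $\epsilon$ small compared to $\rmin/(m\kappa)$ as well as to $1/m$, both available since $r>\rmin/\epsilon$. Your explicit constant $\kappa$ is in fact a point of extra care relative to the paper: since patch radii are maximal by convention, the derived patch's radius is only bounded by the first admissible radius beyond $r_j+\rho$, and the paper's bound $r'\le r+2a$ quietly ignores this gap (harmlessly, since there is room to spare). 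Two cosmetic slips that do not affect correctness: the existence of $k$ follows from the radii of iterated derived patches tending to infinity (Lemma~\ref{lem-rder}(ii)), while $r_{q_0}=0$ only guarantees $k\ge1$; and the asserted equality of the $s$-patch at $v$ with $q+v$ is more than the paper's subset definition of occurrence yields --- but also more than you use, since mere occurrence (containment) suffices throughout your argument.
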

\begin{proof}
Consider a non repulsive tiling $T$, and fix an integer $m>1$.
Since the infimum in equation~\eqref{eq-rep} is zero, for any $0<\epsilon <1/(8m)$, there is an $r_q$-patch $q$ of $T$ for which $\rp(L_q) / r_q < \epsilon$.
By FLC there are two copies of $q$ which occur at some markers $x$ and $y$ of tiles in $T$, satisfying $|x-y|=2\rp(L_q)$.

Set $a=\rp(L_q)$. 
Consider the largest privileged patch $p$ contained in $q$, with same marker, and of radius $r\le r_q$. 
We must have $r\ge r_q/2$ for otherwise, as $p$ occurs both at $x$ and $y$, then one of its derived patches $p'$ would have radius \(r' \le r+ 2a < r_q/2 + 2\epsilon r_q < r_q\).
Hence $p'$ would be contained in $q$,
which contradicts maximality of $p$ in $q$.

Consider the largest privileged patch $p_0\subset p$, with same marker, and radius $r_0 < r/2$.
Since $p_0$ occurs both at $x$ and $y$, it has a derived patch $p_1\subset p$, with radius $r_1 \le r_0 + 2a < r/2 + \epsilon r_q<r$, which is thus a proper sub-patch of $p$.
By Lemma~\ref{lem-der} (iii), we have $p=p_1^{(n-1)}=p_0^{(n)}$ for some $n>1$.
Again $p_1$ occurs both at $x$ and $y$, so it has a derived patch $p_2=p_0^{(2)} \subsetneq p$ of radius $r_2\le r_1+2a \le r_0 + 4a$.
See Figure~\ref{fig-nrep} for an illustration.
\begin{figure}[h]
\centering
\includegraphics[scale=.5]{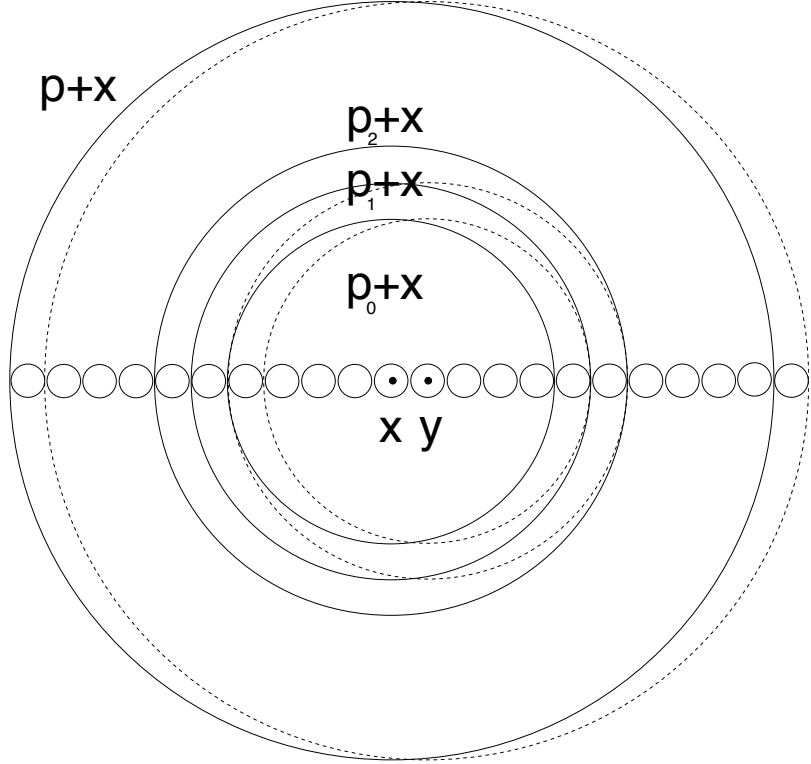}
\caption{The dotted circles, in order of decreasing radii, are the translates $p+y$, $p_1+y$, and $p_0+y$.
The small circles along the horizontal axis are occurrences of the same patch, and illustrate the local periodic pattern generated by the overlaping of $p+x$ and $p+y$.}
\label{fig-nrep}
\end{figure}
We iterate this argument to obtain that there is a patch $p_j=p_0^{(j)}$ with radius $r_j \le r_0 + 2ja$,  for all $j\le n$.
For $j=n$ this last inequality implies \(n \ge r/(4a)  \ge r_q /(8a) > 1/ (8\epsilon) > m\).
We have thus build a sequence of privileged patches $p_0, p_1, \cdots p_m, \cdots p_n=p$, whose first $m+1$ terms give the sequence in (i).
But $p_0$ is the largest privileged patch in $p$  (with same marker and) of radius $r_0< r/2$, hence $p_1=p_0'$ has radius $r_1\ge r/2= r_n/2 \ge r_m/2$, which proves (ii).
\end{proof}

\section{The tree of privileged patches and the Connes metrics}
\label{sec-tree}

We build the tree $\Tt$ of privileged patches of $T$ inductively as follows:
\begin{enumerate}[]

\item{(0)} the root of $\Tt$ stands for the empty patch;

\item{(1)} vertices of order $1$ stand for privileged patches of order 1 (prototiles), each of which is linked by one edge to the root;

\item{(n)} vertices of order $n>1$ stand for privileged patches of order $n$, each of which is linked by one edge to the vertex of order $n-1$ corresponding to the patch it is derived from.

\end{enumerate}
The tree $\Tt$ is well-defined by Lemma~\ref{lem-der} (ii): each vertex of level $n+1$ is linked to a unique vertex of level $n$, for all $n$.

We let $\partial \Tt$ be the set of infinite rooted path in $\Tt$: $\xi =(\xi_n)_{n\ge 0} \in \partial \Tt$ is a sequence of privileged patches, with $\xi_{n+1}$ derived from $\xi_n$ for all $n$.
Given a vertex $v\in \Tt$, we let $[v] \subset \partial \Tt$ be the {\em cylinder} of $v$, namely the set of all infinite paths through $v$.

\begin{prop}
\label{prop-tree}
The set $\partial \Tt$ of infinite paths in $\Tt$, endowed with the topology of cylinders, is homeomorphic to the discrete hull $\Xi$.
\end{prop}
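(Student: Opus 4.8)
The plan is to construct an explicit homeomorphism $\Phi : \partial \Tt \to \Xi$ and verify it is a continuous bijection between compact Hausdorff spaces (whence automatically a homeomorphism). The key observation is that an infinite path $\xi = (\xi_n)_{n\ge 0}$ in $\Tt$ is by definition a nested sequence of privileged patches $\xi_0 \subseteq \xi_1 \subseteq \xi_2 \subseteq \cdots$, all marked at the origin, with $\xi_{n+1}$ derived from $\xi_n$; and since the empty patch $\xi_0$ forces a tile at the origin, each $\xi_n$ is a genuine finite family of tiles covering a neighbourhood of $0$. First I would show that the radii $r_n$ of the $\xi_n$ tend to $+\infty$: this is exactly Lemma~\ref{lem-rder}(ii), which gives $r_{n+1} \ge 2n\,\rmin \to \infty$. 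Consequently the union $\bigcup_n \xi_n$ is a well-defined tiling of $\RM^d$ with a marker at the origin, and I would set $\Phi(\xi)$ to be this tiling; repetitivity and FLC of $T$ guarantee $\Phi(\xi) \in \Xi$.

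Next I would establish that $\Phi$ is a bijection by producing its inverse. Given $T' \in \Xi$ (so $T'$ has a marker at $0$), I would define $\xi_n(T')$ to be the largest privileged patch of $T'$ marked at the origin and of radius strictly smaller than some chosen threshold, or more cleanly, define $\xi_n$ recursively: $\xi_0$ is the empty patch, and $\xi_{n+1}$ is the patch derived from $\xi_n$ within $T'$ (the minimal privileged extension of $\xi_n$ containing a second occurrence of $\xi_n$ near the origin). Here Lemma~\ref{lem-der} is essential: part (i) gives that the derived patch is uniquely determined, so the recursion is well-defined, and part (iii) guarantees that every privileged patch of $T'$ marked at the origin appears as some $\xi_n$, so that the path exhausts all privileged information around $0$. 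This makes $\Psi : T' \mapsto (\xi_n(T'))_n$ a two-sided inverse of $\Phi$: applying $\Phi$ recovers $T'$ because the $\xi_n$ cover larger and larger balls around the origin, and applying $\Psi$ to $\Phi(\xi)$ returns $\xi$ because the recursively-defined derived patches must coincide with the given ones by the uniqueness in Lemma~\ref{lem-der}(i).

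Finally I would check continuity, which amounts to matching the two topologies on basic clopen sets. A cylinder $[v]$ in $\partial\Tt$, for $v$ a privileged patch of radius $r$, consists of all paths passing through $v$; its image under $\Phi$ is precisely the set of tilings in $\Xi$ in which $v$ occurs at the origin. I would show this image is the acceptance domain $[v]\subset\Xi$ (intersected with $\Xi$), using that $v$ occurs at the origin in $T'$ if and only if $v$ lies on the path $\Psi(T')$, again by Lemma~\ref{lem-der}(iii). Since acceptance domains of patches form a base of clopen sets for $\Xi$ and cylinders form a base for $\partial\Tt$, this shows $\Phi$ and $\Phi^{-1}$ are both continuous. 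As $\partial\Tt$ is compact (it is the path space of a tree that is finitely branching at each level, by FLC) and $\Xi$ is Hausdorff, a continuous bijection suffices, but having the explicit inverse makes the argument transparent.

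The main obstacle I expect is the bookkeeping in the bijection: specifically, arguing that the two occurrences of $\xi_n$ required by the derivation relation always sit close enough to the origin that $\xi_{n+1}(T')$ is determined by the tiling data already present, and dually that $\Phi(\xi)$ is a \emph{tiling} (covers all of $\RM^d$ with no gaps) rather than merely a locally finite patch. Both points hinge on $r_n \to \infty$ together with the uniqueness statements of Lemma~\ref{lem-der}, so the real work is marshalling those lemmas rather than any new estimate.
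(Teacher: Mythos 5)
Your construction of the bijection is essentially the paper's own (privileged patches occurring at the origin in one direction, increasing unions with radii tending to infinity via Lemma~\ref{lem-rder}(ii) in the other), but there is a genuine gap in your continuity argument. What your cylinder computation establishes is that $\Phi$ carries cylinders onto acceptance domains of \emph{privileged} patches; this proves that $\Phi$ is an open map, i.e.\ that $\Phi^{-1}$ is continuous, but it does \emph{not} prove that $\Phi$ is continuous. The topology of $\Xi$ is generated by the acceptance domains $[p]$ of \emph{all} patches $p$, and a priori the privileged ones generate only a coarser topology; your sentence ``since acceptance domains of patches form a base \ldots this shows $\Phi$ and $\Phi^{-1}$ are both continuous'' conflates the two families. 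The paper devotes the last step of its proof to exactly this point: given an arbitrary patch $p$, it takes the greatest privileged patch $p_0$ contained in $p$ and the patches $p_1,\dots,p_k$ derived from $p_0$, and observes that $[p]\subset[p_1]\cup\dots\cup[p_k]$; since any $p_i$ whose acceptance domain meets $[p]$ must contain $p$ (it cannot be contained in $p$ by maximality of $p_0$, and two patches occurring at the origin of the same tiling are nested), $[p]$ is a union of cylinders, so the two topologies agree. Your compactness fallback, as stated, cannot rescue this: with $\partial\Tt$ compact and $\Xi$ Hausdorff you would need continuity of $\Phi$ --- precisely the missing piece. The repair is immediate, however: $\Xi$ is compact (it is a Cantor set, as recalled in Section~\ref{sec-basics}) and $\partial\Tt$ is Hausdorff, so the continuity of $\Phi^{-1}$ that you did establish already makes $\Phi^{-1}$, hence $\Phi$, a homeomorphism.

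A second, more minor slip: the well-definedness of your recursion $T'\mapsto(\xi_n(T'))_n$ does not follow from Lemma~\ref{lem-der}(i), which gives uniqueness of the \emph{parent} of a derived patch, not of the derived patch itself --- the tree $\Tt$ does branch, so a privileged patch generally has several derived patches. What you need is that at most one patch derived from $\xi_n$ occurs at the origin of $T'$: two such patches would be nested, and the smaller one, containing two occurrences of $\xi_n$, would violate condition (iii) in the definition of a derived patch applied to the larger one. Existence comes from repetitivity, as in the paper.
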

\begin{proof}
The sets $\partial \Tt$ and $\Xi$ are easily seen to be isomorphic.
Given a tiling $T$ in $\Xi$, let $\xi_0$ be the empty patch and $\xi_1$ the prototile occurring in $T$ at the origin.
Since $T$ is repetitive, there is a (unique) privileged patch $\xi_2$ derived from $\xi_1$ which occurs in $T$ at the origin.
We construct inductively a (unique) sequence of privileged patches occurring at the origin of $T$, which defines an infinite path in $\Tt$.
Conversely, by Lemma~\ref{lem-rder} (ii), a sequence of privileged patches in $\partial \Tt$ defines a unique tiling in $\Xi$.

A basis for the topology of $\Xi$ is given by the acceptance domains $[p]$ of patches.
While cylinders correspond to acceptance domains of privileged patches, hence yield a coarser topology on $\Xi$.
Given a patch $p$, let $p_0$ is the greatest privileged patch contained in $p$, and $p_1, \ldots p_k$ the patches derived from $p_0$.
Then $[p] \subset [p_1] \cup \ldots \cup [p_k]$, so both topologies agree.
\end{proof}

A {\em weight function} is any function $\delta: [0,+\infty) \rightarrow (0,1]$, which decreases and has limit $0$ at $+\infty$.
A weight function allows us to defined a ultra metric on $\Xi$, as in equation~\eqref{eq-ultrametric}, and to build a spectral triple on $C(\Xi)$ as explained in Section~\ref{sec-ST}.
The Connes distance of that spectral triple yields two pseudo-metrics on $\partial \Tt \simeq \Xi$, which we now define.

Given $\xi, \xi'\in \partial \Tt$, we let $\xi \wee \xi'$ denote the vertex at which the paths branch in $\Tt$, and $\Oo(\xi \wee \xi')$ the order of that vertex.
If we identify $\xi,\xi'$ with tilings $T,T'\in\Xi$ by Proposition~\ref{prop-tree}, then $\xi\wee\xi'$ is the greatest common privileged patch which occurs in both $T$ and $T'$ at the origin.
The following define two metrics\footnote{$\dinf$ is a ultra-metric, $\dsup$ is valued in $[0,+\infty]$.} on $\partial \Tt$:
\begin{equation}
\label{eq-dinf}
\dinf (\xi, \xi') = \left\{ \begin{array}{ll} \delta\bigl( r_{\Oo(\xi \wee \xi')}  \bigr)& \text{\rm if } \xi \neq \xi' \\ 0 & \text{\rm if } \xi = \xi' \end{array}\right., 
\end{equation}
and
\begin{equation}
\label{eq-dsup}
\dsup(\xi,\xi') = \dinf(\xi,\xi') + \sum_{n> \Oo(\xi \wee \xi')}  \delta(r_n) + \delta(r'_n) \ ,
\end{equation}
where $r^{(')}_n$ is the radius of the patch $\xi^{(')}_n$ (so one has $r_n=r'_n$ for all $n\le \Oo(\xi \wee \xi')$).

Clearly $\dinf \le \dsup$, and 
one easily sees that $\dinf$ and $\dsup$ are Lipschitz equivalent if and only if
\begin{equation}
\label{eq-Lip}
\exists C>0, \, \forall \xi \in \partial \Tt, \, \forall m\in \NM, \qquad
\delta(r_m)^{-1} \sum_{k \ge 1}  \delta(r_{m+k}) \le C \,.
\end{equation}

\section{Characterisation of repulsive tilings}
\label{sec-charact}

We state our main result.
Let $T$ be an aperiodic, repetitive, and FLC tiling of $\RM^d$, as in Definition~\ref{def-hypT}.
Consider the tree $\Tt$ of privileged patches of $T$ as in the previous section.
Let  \(\delta: [0,+\infty) \rightarrow (0,1]\) be a weight function as in the previous section (decreasing with limit $0$ at infinity), and assume that there exists $c_1, c_2>0$ such that
\begin{equation}
\label{eq-delta}
\delta(ab) \le c_1 \delta(a) \delta(b), \qquad \delta(2a)\ge c_2 \delta(a), \ \forall a,b \ge 0.
\end{equation}
Consider the metrics $\dinf$ and $\dsup$ on $\partial \Tt$ given in equations~\eqref{eq-dinf} and~\eqref{eq-dsup}.

\begin{thm}
\label{thm-main}
The following are equivalent:
\begin{enumerate}[(i)]
\item $T$ is repulsive,
\item $\dinf$ and $\dsup$ are Lipschitz equivalent.
\end{enumerate}
\end{thm}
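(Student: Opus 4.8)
The plan is to prove both implications using the Lipschitz criterion~\eqref{eq-Lip}, the growth estimates for radii in Lemma~\ref{lem-rder}, the slow-growth construction in Lemma~\ref{lem-nrep}, and the two submultiplicativity-type conditions on $\delta$ in~\eqref{eq-delta}. The key structural fact I will exploit throughout is that, along any infinite path $\xi\in\partial\Tt$, the radii $(r_n)$ satisfy $r_{n+1}\ge (2\ell+1)r_n$ when $T$ is repulsive, so they grow at least geometrically; this geometric growth is exactly what makes the tail sum in~\eqref{eq-Lip} comparable to its first term.

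For the implication (i)$\Rightarrow$(ii), I would start from repulsiveness, so that $r_{m+k}\ge(2\ell+1)^k r_m$ for every path and every $m,k\ge1$ by iterating Lemma~\ref{lem-rder}(ii) (more precisely its one-step form $r_{n+1}\ge(2\ell+1)r_n$). Writing $\lambda=(2\ell+1)>1$, I would bound each summand $\delta(r_{m+k})\le\delta(\lambda^k r_m)$ using monotonicity of $\delta$, and then apply the first inequality in~\eqref{eq-delta}, $\delta(ab)\le c_1\delta(a)\delta(b)$, with $a=\lambda^k$ and $b=r_m$, to factor out $\delta(r_m)$:
\[
\delta(r_m)^{-1}\sum_{k\ge1}\delta(r_{m+k})\le\delta(r_m)^{-1}\sum_{k\ge1}c_1\,\delta(\lambda^k)\,\delta(r_m)=c_1\sum_{k\ge1}\delta(\lambda^k).
\]
It then remains to check that $\sum_{k\ge1}\delta(\lambda^k)<\infty$, which follows because~\eqref{eq-delta} forces $\delta$ to decay at least like a fixed negative power: iterating $\delta(ab)\le c_1\delta(a)\delta(b)$ gives $\delta(\lambda^k)\le c_1^{k-1}\delta(\lambda)^k$, and after fixing $\lambda$ large enough (or absorbing constants) the geometric series converges. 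This yields a uniform constant $C$ independent of $\xi$ and $m$, which is exactly~\eqref{eq-Lip}.

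For the contrapositive of (ii)$\Rightarrow$(i), I would assume $T$ is not repulsive and use Lemma~\ref{lem-nrep} to violate~\eqref{eq-Lip}. Given any prospective constant $C$, choose $m$ large and apply Lemma~\ref{lem-nrep} to produce privileged patches $p_0,\dots,p_m$ with $p_j=p_0^{(j)}$ and $r_m\le 2r_1$. Extend this finite chain to an infinite path $\xi\in\partial\Tt$ by repetitivity (as in the proof of Proposition~\ref{prop-tree}), and evaluate the left-hand side of~\eqref{eq-Lip} at the index corresponding to $p_1$: the partial sum $\sum_{k=1}^{m-1}\delta(r_{1+k})$ already contains $m-1$ terms each at least $\delta(r_m)$, and since $r_m\le 2r_1$ the second condition $\delta(2a)\ge c_2\delta(a)$ gives $\delta(r_m)\ge\delta(2r_1)\ge c_2\delta(r_1)$. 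Hence
\[
\delta(r_1)^{-1}\sum_{k\ge1}\delta(r_{1+k})\ge\delta(r_1)^{-1}(m-1)\,\delta(r_m)\ge (m-1)\,c_2,
\]
which exceeds $C$ for $m$ large. Thus no uniform $C$ can exist and~\eqref{eq-Lip} fails.

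The main obstacle I anticipate is the bookkeeping in the second direction: Lemma~\ref{lem-nrep} controls only radii, so I must make sure the indices $0,1,\dots,m$ of the slow-growing chain sit consecutively along a genuine infinite path in $\Tt$ (so that they really are successive branch-levels, justifying the telescoping of~\eqref{eq-dsup}), and I must invoke $\delta(2a)\ge c_2\delta(a)$ at the right place to convert the radius bound $r_m\le2r_1$ into a lower bound on $\delta(r_m)$ in terms of $\delta(r_1)$. In the forward direction the only delicate point is extracting summability of $\sum_k\delta(\lambda^k)$ purely from~\eqref{eq-delta}; I expect this to be routine once $\lambda>1$ is fixed, but it is where the first hypothesis on $\delta$ is genuinely used.
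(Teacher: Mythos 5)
Your proposal is correct and follows the paper's own proof essentially verbatim: the same criterion~\eqref{eq-Lip}, Lemma~\ref{lem-rder}(ii) with the factor $(2\ell+1)^k$ and the first inequality of~\eqref{eq-delta} for (i)$\Rightarrow$(ii), and Lemma~\ref{lem-nrep} with the bound $\delta(r_m)\ge\delta(2r_1)\ge c_2\delta(r_1)$ evaluated at the index of $p_1$ for the contrapositive of (ii)$\Rightarrow$(i). The only point of divergence is cosmetic: to sum $\sum_k\delta(\lambda^k)$ the paper rescales $\delta$ so that $c_1=1$ and $\delta(2\ell+1)<1$, whereas you iterate submultiplicativity and appeal to ``fixing $\lambda$ large enough'' --- note that $\lambda=2\ell+1$ is imposed by the tiling and cannot be chosen, so the clean fix (for both arguments) is to pick $j$ with $c_1\delta(\lambda^j)<1$, which exists since $\delta\to0$ at infinity, and group the series in blocks of $j$ terms using monotonicity of $\delta$.
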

\begin{proof}
Assume $T$ is repulsive, so $\ell>0$ in equation~\eqref{eq-rep}.
Upon rescalling $\delta$, we may assume that  $c_1=1$ in equation~\eqref{eq-delta}, and $\delta(2\ell +1)<1$.
Pick $m\in\NM$ and $\xi\in\partial \Tt$.
Let $r_n$ be the radius of the patch $\xi_n$, $n\ge 0$.
By Lemma~\ref{lem-rder} (ii), for any $k\ge 1$ we have $r_{m+k}\ge (2\ell +1)^k r_m$.
Hence
\[
\delta(r_{m})^{-1} \sum_{k\ge 1 }  \delta(r_{m+k}) \le  
\delta(r_{m})^{-1} \sum_{k\ge 1 }  \delta((2\ell+1)^k) \delta(r_{m}) \le
 \sum_{k\ge 1 }  \delta(2\ell +1)^k\,,
\]
where the last two inequalities follows from equation~\eqref{eq-delta}.
The converging geometric series on the right hand side gives a uniform bound in equation~\eqref{eq-Lip}, which proves that $\dinf$ and $\dsup$ are Lipschitz equivalent.

Assume that $T$ is not repulsive.
Fix an integer $N$ (large), and consider a sequence of privileged patches $p_0, p_1, \ldots p_{m}$, $m>N$, as in Lemma~\ref{lem-nrep}.
Choose an infinite path $\xi\in\partial \Tt$ going through the vertices associated with $p_0, p_1 \ldots p_{m}$.
Upon a change of index, we may assume that $\xi_j$ corresponds to $p_j$, for $j=1, \ldots m$.
Then
\[
\delta(r_{1})^{-1} \sum_{k\ge 1}  \delta(r_{1+k}) \ge \frac{1}{\delta(r_{1})} \sum_{j=2}^{m} \delta(r_j) \ge \frac{m-1}{\delta(r_{1})}\delta(r_m) \ge \frac{N}{\delta(r_1)} \delta(2r_1)\ge Nc_2\,,
\]
where we used that $\delta$ decreases, and equation~\eqref{eq-delta}.
Since $N$ was arbitrary, one cannot bound the series on the left hand side.
So there exists no uniform bound in equation~\eqref{eq-Lip}, and thus $\dinf$ and $\dsup$ are not Lipschitz equivalent.
\end{proof}

\section{The spectral triple}
\label{sec-ST}

For the sake of completeness, we remind the reader of the spectral triple on $C(\Xi)\cong C(\partial \Tt)$ whose Connes distance yields $\dinf$ and $\dsup$.
The construction in \cite{KS10} is given for any tree, and in \cite{KLS11} for the tree of privileged words of a $1d$-subshift, which we rewrite here for the tree of privileged patches defined in Section~\ref{sec-tree}.
These constructions are related to other spectral triples build for metric spaces \cite{Ri99,Ri04,CI07} or more specifically fractals
\cite{GI03,GI05,CIL08} and ultrametric Cantor sets \cite{PB09}.
We refer the reader to \cite{KS10} and \cite{KLS11} for details and proofs.

We consider the tree $\Tt=(\Tt^0,\Tt^1)$ of privileged patches, and a weight $\delta$ as in Section~\ref{sec-tree}.
We add {\em horizontal edges} $\Hh$ to the graph $\Tt$: $\Hh=\cup_{n\ge 1} \Hh_n$, and $\Hh_n$ is a set of oriented edges between vertices of level $n$ in $\Tt$ defined as follows. If $u_1,u_2\in \Tt$, then there is one horizontal edge $h\in \Hh_n$ with source $s(h)=u_1$ and range $r(h)=u_2$, if and only if $u_1$ and $u_2$ stand for two distinct privileged patches of order $n$ both of which are derived from the same privileged patch of order $n-1$.
Given any such $h$, there is then an edge $h^\op\in \Hh$ with reverse orientation: $r(h^\op)=s(h)$ and $s(h^\op)=r(h)$, and $(h^\op)^\op=h$.
We fix an orientation of $\Hh$, and write the decomposition into positively and negatively oriented edges $\Hh=\Hh_+ \cup \Hh_- $.

A {\em choice} is a map $\tau: \Tt^{0} \rightarrow \partial \Tt$ such that $\tau(v)$ is an infinite path through vertex $v$.
The {\em approximation graph} $G_\tau=(V,E)$ is defined by
\[
V= \tau(\Tt^0), \qquad E= \tau\times \tau (\Hh).
\]
The orientation on $\Hh$ is carried over to $E=E_+ \cup E_-$.
We endow $G_\tau$ with the graph metric induced by the weight $\delta$: for $e=\tau\times\tau(h) \in E$ we set the length of $e$ to be $\ell(e) = \delta(r_h)$, where $r_h$ is the radius of the privileged patch from which $s(h)$ and $r(h)$ are derived.
The set of vertices $V$ is dense in $\partial \Tt$, and the set of edges $E$ encodes adjacencies according to the choice $\tau$.

We consider the spectral triple associated with the approximation graph $G_\tau=(V,E)$: $\bigl( C(\partial\Tt), \ell^2(E), \pi_\tau, D\bigr)$.
The C$^\ast$-algebra $C(\partial \Tt)$ of continuous functions over $\partial\Tt$ is faithfully represented on the Hilbert space $\ell^2(E)$ by
\[
\pi_\tau (f) \phi(e) = f\bigl( s(e) \bigr) \phi(e).
\]
The Dirac $D$ is the unbounded operator on $\ell^2(E)$, with compact resolvent, given by
\[
D\phi (e) = \frac{1}{\ell(e)} \phi( e^\op ).
\]
The ``non commutative derivation'' is the finite difference operator 
\[
\bigl[ D,\pi_\tau(f) \bigr] \phi(e) = \frac{f(s(e)) - f(r(e))}{\ell(e)} \phi (e^\op),
\]
which is bounded over the pre-C$\ast$-algebra $C_{Lip}(\partial \Tt)$ of Lipschitz continuous functions over $\partial \Tt$.
The Connes distance of the spectral triple is a pseudo-metric over $\partial \Tt$ which reads
\begin{eqnarray*}
d_\tau (\xi, \xi') &=&  \sup_{f\in C(\partial \Tt)} \Bigl\{ \bigl| f(\xi) - f(\xi')\bigr| : \| [D, \pi_\tau(f) \| \le 1 \Bigr\}  \\
& = & \sup_{f\in C(\partial \Tt)} \Bigl\{ \bigl| f(\xi) - f(\xi') \bigr| : |f(s(e)) - f(r(e))| \le \ell(e), \, \forall e\in E \Bigr\}
\end{eqnarray*}
where the norm is the operator norm on $\ell^2(E)$.
It is an extension of the graph metric of $G_\tau$ to $\partial \Tt$.
If it is continuous for the topology of $\Tt$, it reads explicitly:
\[
d_\tau(\xi, \xi') = \dinf(\xi, \xi') + \sum_{n> \Oo(\xi\wee\xi')} b_\tau(\xi_n) \delta(r_n) + b_\tau(\xi'_n) \delta(r'_n)\,,
\]
where $\xi^{(')}=(\xi^{(')}_n)$, $r^{(')}_n$ is the radius of the patch $\xi^{(')}_n$, and for $\eta\in\partial\Tt$, $b_\tau(\eta_n) = 1$ if $\eta$ and $\tau(\eta_n)$ branch at $\eta_n$, and $b_\tau(\eta_n) = 0$ otherwise.
The infimum and supremum of $d_\tau$ over choices $\tau$ yield the metrics $\dinf$ of equation~\eqref{eq-dinf} and $\dsup$ and~\eqref{eq-dsup} respectively.

\end{document}